\newtheorem{theorem}{Theorem}[section]
\newtheorem{lemma}{Lemma}
\newtheorem{proposition}[theorem]{Proposition}
\newtheorem{remark}{Remark}
\providecommand{\keywords}[1]
{
  \small	
  \textbf{\textit{Keywords---}} #1
}
\title{Preservation of structural properties of the \texorpdfstring{CIR}{cir} model by  \texorpdfstring{$\theta$}{teta}-Milstein schemes}
\author[1]{S. Llamazares-Elias\,\orcidlink{0000-0001-9219-6749}}
\author[2]{A. Tocino\,\orcidlink{0000-0002-7910-1570}}
\affil[1]{Department of Mathematics, University of Salamanca, Spain\\\texttt{samirllamazares@usal.es}}
\affil[2]{Department of Mathematics, University of Salamanca, Spain\\\texttt{bacon@usal.es}}
\date{}
\begin{document}
\maketitle

\begin{abstract}
 The ability of $\theta$-Milstein methods with $\theta\ge 1$ to capture the non-negativity and the mean-reversion property of the exact solution of the CIR model is shown. 
 In addition, the order of convergence and the preservation of the long-term variance is studied. These theoretical results are illustrated with numerical examples.
\end{abstract}

\keywords{CIR model, Mean-reversion, Non-negativity, Stochastic numerical method, Implicit Milstein}
\section{Introduction}\label{secintro}

Stochastic differential equations (SDEs)
\begin{equation}
\label{sde}
dX(t)=f(X(t))\,dt+g(X(t))\,dW(t),
\end{equation}
where $W_t$ is a standard Wiener process,
are an important tool in many fields due to their applications to modelling dynamical phenomena. In Finance, several models have been proposed to describe the changes of interest rates over time, see e.g. \cite{shreve} and the references therein.  In this work, we focus on the Cox-Ingersoll-Ross (CIR) model \cite{cox}, that describes the interest rate as the solution to 
\begin{equation}
\label{CIR}
    dX(t)=\alpha(\mu-X(t))\,dt+\sigma
\sqrt{X(t)}\,dW(t)
\end{equation}
where $\alpha,\mu$ and $\sigma$ are positive constants. Due to its nonlipschitzian diffusion coefficient, general existence theorems \cite{oksendal} do not apply to the CIR model. Instead, specific results showing the existence and uniqueness of a non-negative (a.e.) strong solution $X(t)$, called the CIR process, can be found in the literature, \cite{mao,yamada}. 
In addition, see \cite{feller}, if $X(0)>0$ and the parameters of the equation satisfy the Feller condition 
$2\alpha\theta \ge\sigma^2$  then $ X(t)>0$ for all $t\in[0,\infty) $, i.e., the solution remains positive when it starts positive.

Since the interest rate reverts to its long-term mean, mean reversion is a desirable property of interest rate models. The CIR model is mean reverting since the drift term represents a force pulling the interest rate towards its long-term mean \cite{shreve}:
Taking expectation in \eqref{CIR}  gives
\begin{equation}
 \label{media}
\mathbb{E}[X(t)]=e^{-\alpha t}\left(\mathbb{E}[X_0]-\mu\right)+\mu,
\end{equation} 
and, since $\alpha>0$,
\begin{equation*}
\lim_{t\to\infty}\mathbb{E}[X(t)]=\mu,   
\end{equation*}
i.e., the parameter $\mu$ is the long-term mean of $X(t)$ and the parameter $\alpha$ represents the speed of convergence. 
In addition, for the CIR process $X(t)$  it can be seen \cite{shreve} that
$$
\mathbb{E}[X(t)^2]=
\mu^2+\frac{\sigma^2\mu}{2\alpha}+e^{-2\alpha t}\left(\mathbb{E}[X_0^{2}]+\left(2\mu+\frac{\sigma^2}{\alpha}\right)\left(\frac{\mu}{2}-\mathbb{E}[X_0]\right)\right)
+e^{-\alpha t}\left(2\mu+\frac{\sigma^2}{\alpha}\right)\left(\mathbb{E}[X_0]-\mu\right).
$$
Consequently, \begin{equation}\label{media2}
    \lim_{t\to\infty}\mathbb{E}[X(t)^2]=\mu ^2+\frac{ \sigma ^2 \mu }{2 \alpha },
\end{equation}
and the long-term variance of the CIR process is 
$$\lim_{t\to\infty}\mbox{Var}(X(t))=\lim_{t\to\infty}\mathbb{E}[X(t)^2]-\lim_{t\to\infty}\mathbb{E}[X(t)]^2=\frac{ \sigma ^2 \mu }{2 \alpha }.$$

Although the transition density and the distribution of the CIR process are known, a closed form of the solution is not available except when
the parameters of equation \eqref{CIR} satisfy the relation $\sigma^2=4\alpha\mu$,
see \cite{hefter}. 
For this reason, numerical schemes that approximate the solution are required. A number of methods to solve numerically stochastic differential equations have been proposed, see \cite{kloeden} and the references therein. 
Nevertheless, general methods may not work in this problem if they use evaluations of the diffusion coefficient  $\sigma\sqrt{x}$ or of its derivatives, which are not well-defined when negative values appear.
To overcome this difficulty, numerical methods specially designed to solve the CIR equation have been proposed in the literature, see \cite{alfonsi,bossy,dereich,hefter,hmao,lord}.
 A desirable property of any numerical method for solving an SDE is the preservation of qualitative properties of the exact solution. In this sense, our goal is to propose schemes that applied to the CIR problem give numerical approximations $\{X_n\}$ of $X(t)$ that preserve:
 \begin{enumerate}
 \item[(P0)] the non-negativity of the solution, i.e. $X_n\ge 0$,
 \item[(P1)] the mean reverting property, i.e.
  \begin{equation}
    \label{meanprop}
    \lim_{n\to\infty}\mathbb{E}[X_n]=\lim_{t\to\infty}\mathbb{E}[X(t)]=\mu,
\end{equation}
 \item[(P2)] the long-term second moment, i.e.
 \begin{equation}
 \label{m2}
\lim_{n\to\infty}\mathbb{E}[X_n^2]=\lim_{t\to\infty}\mathbb{E}[X(t)^2]=\mu ^2+\frac{ \sigma ^2 \mu }{2 \alpha }.
\end{equation}
 \end{enumerate}
 
Fulfillment of properties (P1)-(P2) have been studied for the modified Euler method in \cite{hmao} and for a specially designed class of methods in \cite{nuestro}. Higham and Mao, see \cite{hmao}, determined that, under a restriction on the step size, identity
\eqref{meanprop} is verified for the 
modified Euler method.
In \cite{nuestro}, the authors propose a family of methods 
for the numerical solution of the CIR model reproducing the mean-reversion property, as well as a method that captures exactly the first and long-term second moments.

Different numerical schemes have been proposed with better convergence rates than Euler-Maruyama scheme. 
In this work, we consider the family of semi-implicit Milstein, also named $\theta$-Milstein, methods, see \cite{higham}, that for computing numerical solutions of \eqref{sde} take the form
\begin{equation}\label{mil}
    X_{n+1}=
    X_{n}+\left\{(1-\theta)f(X_{n})+\theta f(X_{n+1})\right\}\Delta+g(X_n)\Delta W_n
    +\frac{1}{2}g(X_n)g'(X_n)\left(\Delta W_n^2-\Delta\right)
    \end{equation}
where $\Delta>0$ represents the step-size, and $\theta\in\mathbb{R}$ controls the degree of implicitness. The values $\theta=0$ and $\theta=1$ give the explicit and the fully implicit Milstein methods respectively \cite{mil}. Although it is usual to apply the semi-implicit Milstein methods with values $0\le \theta\le 1$, Higham \cite{higham} showed how the values $\theta>1$ present better stability behavior. 
 In this work, we analyze the suitability of $\theta$-Milstein schemes to solve the CIR model. The remainder of the paper is organized as follows. In Section 2 we study the conditions under which $\theta$-Milstein schemes, when applied to the CIR problem, yield non-negative solutions. Once established the applicability of these schemes, their convergence, both in the strong and weak senses, is addressed in Section 3. Sections 4 and 5 focus on the fulfillment of properties (P1) and (P2) respectively by implicit Milstein methods. Finally, we present in Section 6 numerical experiments that corroborrate our theoretical results. 

\section{Preservation of non-negativity by 
\texorpdfstring{$\theta$}{theta}-Milstein methods}
Given $X_n\ge 0$, the recurrence \eqref{mil} defined by the $\theta$-Milstein method to solve numerically the CIR equation \eqref{CIR} becomes
\begin{equation*}\label{nuestro}
    X_{n+1}=\left(1-\alpha\Delta+\alpha\theta\Delta\right) X_{n}-\alpha\theta\Delta X_{n+1}+\alpha\mu\Delta+\sigma\sqrt{X_n}\Delta W_n+\frac{\sigma^2}{4}\left(\Delta W_n^2-\Delta\right).
    \end{equation*}
Then
\begin{equation}\label{mil2}
    \begin{aligned}
    X_{n+1}=\frac{1}{1+\alpha\theta\Delta}
    \left\{\left(1-\alpha\Delta+\alpha\theta\Delta\right) X_{n}+\left(\alpha\mu-\frac{\sigma^2}{4}\right)\Delta
   +\sigma\sqrt{X_n}\Delta W_n+\frac{\sigma^2}{4} \Delta W_n^2\right\},
    \end{aligned}
    \end{equation}
which can be written
\begin{equation}
\label{mil3}X_{n+1}=\frac{1}{1+\alpha\theta\Delta}
    \left\{\alpha\Delta(\theta-1) X_{n}+\left(\alpha\mu-\frac{\sigma^2}{4}\right)\Delta+h(W_n)\right\},    
    \end{equation}
where  $h(z)=X_n+\sigma\sqrt{X_n}z+\frac{\sigma^2}{4}z^2$ for $z\in\mathbb{R}$.
The quadratic function $h$ (whose graph is an open upward parabola) attains its absolute minimum at $z^*=-2\sqrt{X_n}/\sigma$. Then for all $z\in\mathbb{R}$
$$h(z)\ge h(z^*)=X_n+\sigma\sqrt{X_n}\,\frac{-2\sqrt{X_n}}{\sigma}+\frac{\sigma^2}{4}\left(\frac{-2\sqrt{X_n}}{\sigma}\right)^2=0.$$
Using this result in \eqref{mil3} gives
$$
X_{n+1}\ge \frac{1}{1+\alpha\theta\Delta}
        \left\{\alpha\Delta(\theta-1) X_{n}+\left(\alpha\mu-\frac{\sigma^2}{4}\right)\Delta\right\}.    
$$
From here:
\begin{proposition}
\label{non-neg}
The $\theta$-Misltein scheme \eqref{mil2} with $\theta\ge 1$ starting at $X_0\ge0$ preserves the non-negativity of the exact solution $X(t)$  if the parameters of the CIR model fulfill the condition
$4\alpha\mu\ge\sigma^2$. In particular, under the Feller condition
it preserves the positivity of $X(t)$.
\end{proposition}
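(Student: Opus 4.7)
The plan is to exploit the lower bound
$$X_{n+1}\ge\frac{1}{1+\alpha\theta\Delta}\Bigl\{\alpha\Delta(\theta-1)X_n+\Bigl(\alpha\mu-\tfrac{\sigma^2}{4}\Bigr)\Delta\Bigr\}$$
derived immediately above the statement, and then simply chase signs. Assuming $X_n\ge 0$, each contribution on the right is non-negative under the stated hypotheses: the prefactor $1/(1+\alpha\theta\Delta)$ is positive since $\alpha,\theta,\Delta>0$; the term $\alpha\Delta(\theta-1)X_n$ is non-negative because $\theta\ge 1$; and the constant term $(\alpha\mu-\sigma^2/4)\Delta$ is non-negative precisely because $4\alpha\mu\ge\sigma^2$. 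Hence $X_{n+1}\ge 0$, and induction on $n$ starting from $X_0\ge 0$ propagates non-negativity to all iterates.

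For the positivity statement under the Feller condition $2\alpha\mu\ge\sigma^2$, I would observe that this strengthens the constant term to $\alpha\mu-\sigma^2/4\ge \sigma^2/4>0$, so the deterministic lower bound is already strictly positive, independently of $X_n$. Thus $X_{n+1}>0$ at every step, and since $X_0>0$ is assumed by Feller, $X_n>0$ holds for all $n$ by the same induction.

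The main obstacle is essentially conceptual and has already been disposed of in the material preceding the statement: recognising that $h(z)=X_n+\sigma\sqrt{X_n}\,z+(\sigma^2/4)z^2$ is a perfect square, $h(z)=(\sqrt{X_n}+(\sigma/2)z)^2\ge 0$, is what removes the random contribution $\sigma\sqrt{X_n}\Delta W_n+(\sigma^2/4)\Delta W_n^2$ from any potential sign issue. Once that observation is in place, the proposition reduces to the elementary sign check above, and one sees that $\theta\ge 1$ is exactly the threshold required to eliminate the sign-indefinite $-\alpha\Delta X_n$ term that appears in the discretised drift when $\theta<1$.
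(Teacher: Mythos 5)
Your proof is correct and follows essentially the same route as the paper: the authors establish exactly the lower bound $X_{n+1}\ge\frac{1}{1+\alpha\theta\Delta}\{\alpha\Delta(\theta-1)X_n+(\alpha\mu-\sigma^2/4)\Delta\}$ via the non-negativity of the quadratic $h$, and the proposition is then the same sign check and induction you perform. Your additional observation that the Feller condition makes the constant term strictly positive, yielding strict positivity of every iterate, is a correct and slightly more explicit justification of the final sentence than the paper gives.
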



\begin{remark}
The result for $\theta=1$ was shown in \cite{kahl}. On the other hand, notice the difference between the above results and those in \cite{scal}, where some inaccuracies have been detected. 
\end{remark}

Non-negativity of $X_n$ is a necessary condition for the correct definition of $\theta$-Milstein schemes in \eqref{mil2}. From now on, due to Proposition \eqref{non-neg}, to ensure a meaningful study, we shall focus on $\theta$-Milstein schemes with $\theta\ge 1$ applied to solve numerically CIR problems with parameters fulfilling the condition $\sigma^2\le 4\alpha\mu$.

\section{Strong and weak convergence of \texorpdfstring{$\theta$}{theta}-Milstein methods}\label{secorder}

 It is known that under appropriate conditions on the coefficients $f$ and $g$ of \eqref{sde}, implicit Milstein methods \eqref{mil} are convergent with order 1 in both the strong and weak senses. Unfortunately, this is not the case for equation \eqref{CIR} due to its diffusion coefficient, and a specific study is needed.  Here, using the approach presented in Alfonsi \cite{alfonsi}, we show that the weak order 1 of \eqref{mil} remains, whereas it converges in the strong sense with logarithmic order, that is, \begin{equation}\label{so}    \mathbb{E}\left[\lvert X(t_n)-X^\Delta_{n+1}\rvert\right]=\mathcal{O}\left(\frac{1}{\log{\Delta}}\right).
\end{equation} 
In the following. we use the extension of big $\mathcal{O}$ notation to  probability theory: for a scheme, $\left(Z_n^\Delta\right)$ we will write $Z_n^\Delta=\mathcal{O}(\Delta^s)$ when $|Z_n^\Delta|/\Delta^s$ has uniformly bounded moments for all sufficiently small $\Delta>0$.

\begin{proposition}\label{boundedtl}
Starting at $X_0{\ge}0$, the implicit $\theta$-Misltein scheme \eqref{mil} with $\theta\ge 1$ applied to solve \eqref{CIR} with  
$4\alpha\mu\ge\sigma^2$ has uniformly bounded moments. 
\end{proposition}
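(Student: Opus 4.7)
The plan is to prove, by induction on the positive integer $p$, a one-step recursive bound of the form
\begin{equation*}
\mathbb{E}\!\left[X_{n+1}^p \mid X_n\right] \le \left(1 - \tfrac{c_p}{2}\Delta\right) X_n^p + L_p\,\Delta,
\end{equation*}
valid on some interval $\Delta \in (0, \Delta_0]$, with positive constants $c_p, L_p, \Delta_0$ independent of $n$. Taking full expectation and iterating this recursion yields $\mathbb{E}[X_n^p] \le \mathbb{E}[X_0^p] + 2 L_p/c_p$ uniformly in $n$, and uniform bounds on non-integer moments then follow from Jensen's inequality applied with the appropriate concave exponent.

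To derive the one-step inequality, I would start from \eqref{mil3}, rewritten as
\begin{equation*}
X_{n+1} = a'\,X_n + b + c\bigl(\sqrt{X_n} + \tfrac{\sigma}{2}\Delta W_n\bigr)^{2},
\end{equation*}
with $a' = \alpha(\theta-1)\Delta/(1+\alpha\theta\Delta) \ge 0$, $b = (\alpha\mu - \sigma^2/4)\Delta/(1+\alpha\theta\Delta) \ge 0$ (both signs following from $\theta \ge 1$ and $4\alpha\mu \ge \sigma^2$), and $c = 1/(1+\alpha\theta\Delta)$. Conditional on $X_n$, the random variable $\sqrt{X_n} + \sigma\Delta W_n/2$ is $\mathcal{N}(\sqrt{X_n}, \sigma^2\Delta/4)$-distributed, so its $2k$-th moment is an explicit polynomial in $X_n$ of degree $k$ with leading term $X_n^k$ and subleading terms that all carry positive powers of $\Delta$. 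A trinomial expansion of $X_{n+1}^p$ followed by term-by-term conditional expectation therefore produces a polynomial of degree $p$ in $X_n$ with non-negative coefficients.

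A direct computation shows that the coefficient of $X_n^p$ collapses to
\begin{equation*}
(a' + c)^p = \left(1 - \frac{\alpha\Delta}{1+\alpha\theta\Delta}\right)^p,
\end{equation*}
which for sufficiently small $\Delta$ is bounded above by $1 - c_p\Delta$, say with $c_p = p\alpha/2$. Every coefficient of $X_n^j$ for $j < p$ must contain at least one positive power of $b$, of $a'$, or of $\sigma^2\Delta/4$ (arising from a subleading normal moment), and is therefore $\mathcal{O}(\Delta)$. Applying Young's inequality $X_n^j \le \epsilon X_n^p + C_{\epsilon,j}$ (valid because $X_n \ge 0$ by Proposition \ref{non-neg}) with $\epsilon$ small absorbs those lower-order terms into the $X_n^p$ term at the cost of halving $c_p$ and introducing an additive constant $L_p\Delta$, which delivers the target recursion.

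The main obstacle is the combinatorial bookkeeping needed to verify that the coefficient of $X_n^j$ for every $j<p$ really acquires a factor of $\Delta$: in the trinomial expansion of $(a'X_n + b + cU^2)^p$ with multi-index $(i,j',k)$ satisfying $i+j'+k=p$, combined with the degree-$k$ expansion of $\mathbb{E}[U^{2k}\mid X_n]$ whose subleading coefficients are of order $\Delta^{k-m}$, one must check that the maximal configuration $X_n^p$ is attainable only when $j'=0$ and $m=k$, and that any other assignment forces at least one of the three $\Delta$-carrying factors $b^{j'}$, $(a')^i$, or $(\sigma^2\Delta/4)^{k-m}$ into play. This verification, while routine, is where the bulk of the technical work resides.
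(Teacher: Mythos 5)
Your argument is correct, but it takes a genuinely different route from the paper. The paper's proof is a two-line reduction: starting from \eqref{mil2} it drops the prefactor $1/(1+\alpha\theta\Delta)<1$ (legitimate because the bracket is non-negative), bounds the result by $(1+\tau\Delta)X_n+\sigma\sqrt{X_n}\,\Delta W_n+\mathcal{O}(\Delta)$ with $\tau=\alpha\theta$, and then invokes Lemma~2.6 of Alfonsi \cite{alfonsi}, which asserts uniformly bounded moments for any non-negative adapted scheme admitting such a one-step upper bound. What you propose is, in effect, a self-contained re-derivation of that lemma in this particular instance, exploiting the perfect-square structure $X_{n+1}=a'X_n+b+c\bigl(\sqrt{X_n}+\tfrac{\sigma}{2}\Delta W_n\bigr)^2$ and the explicit Gaussian moments of $\sqrt{X_n}+\tfrac{\sigma}{2}\Delta W_n$ conditionally on $\mathcal{F}_{t_n}$. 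Your key computations check out: $a'+c=1-\alpha\Delta/(1+\alpha\theta\Delta)=A$, so the leading coefficient is $A^p\le 1-c_p\Delta$ for small $\Delta$; the configuration count correctly shows that $X_n^p$ survives only when $j'=0$ and the top Gaussian moment is taken, so every lower-degree coefficient carries a factor of $b=\mathcal{O}(\Delta)$ or of $\sigma^2\Delta/4$; and Young's inequality then closes the recursion (note that, having Young at your disposal, the induction on $p$ is not actually needed — alternatively you could drop Young and use the inductive hypothesis that $\sup_n\mathbb{E}[X_n^j]<\infty$ for $j<p$ to absorb the lower-order terms after taking full expectation). The trade-off is clear: the paper's proof is shorter and delegates the technical work to a cited result, while yours is self-contained, makes the contraction constant explicit, and in fact yields a bound $\mathbb{E}[X_n^p]\le\mathbb{E}[X_0^p]+2L_p/c_p$ that is uniform over all $n$, not merely over $t_n$ in a compact interval — at the price of the combinatorial bookkeeping you rightly identify as the bulk of the work (and of the implicit assumption, shared with the paper, that $X_0$ has finite moments).
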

\begin{proof}
We show that ${X}_{{n}}=\mathcal{O}(1)$. It is clear that $\left(X_n\right)$ is adapted and, from Proposition \ref{non-neg}, it is non-negative. Its expression in \eqref{mil2} leads to
\begin{align}
\label{be1}
    X_{n+1}&=\frac{1}{1+\alpha\theta\Delta}
    \left\{\left(1-\alpha\Delta+\alpha\theta\Delta\right) X_{n}+\left(\alpha\mu-\frac{\sigma^2}{4}\right)\Delta
   +\sigma\sqrt{X_n}\Delta W_n+\frac{\sigma^2}{4} \Delta W_n^2\right\}\\&<\left(1-\alpha\Delta+\alpha\theta\Delta\right) X_{n}+\left(\alpha\mu-\frac{\sigma^2}{4}\right)\Delta  +\sigma\sqrt{X_n}\Delta W_n+\frac{\sigma^2}{4} \Delta W_n^2
    \\&
\le(1+\tau\Delta)X_n+\sigma\sqrt{X_n}\Delta W_n+\mathcal{O}(\Delta)\nonumber
\end{align}
where $\tau=\alpha\,\theta>0$. Since the hypotheses of Lemma 2.6 in \cite{alfonsi} are fulfilled,  we conclude.
\end{proof}

{We now address the weak convergence of the proposed scheme. The proof is strongly based on a  result presented in \cite{alfonsi} that we state here as a lemma.
\begin{lemma}
\label{weakprop}
Let us suppose that $\left(X_{n}^\Delta\right)$ 
 is a nonnegative adapted scheme
such that:
\begin{align}
&X_{n+1}^\Delta={X}_{n}^{\Delta}+\alpha\left(\mu- {X}_{n}^{\Delta}\right)\Delta+\sigma \sqrt{{X}_{n}^{\Delta}}\Delta W_n+m_{t_{n+1}}^{n}-m_{t_{n}}^{n}+\mathcal{O}\left(\Delta^{2}\right)\label{hw1} \\
&\mathbb{E}\left[\left(X_{n+1}^\Delta-{X}_{n}^{\Delta}\right)^{2} \left| \mathcal{F}_{t_{n}}^{\phantom{l}}\right.\right]=\sigma^{2} {X}_{n}^{\Delta} \Delta+\mathcal{O}\left(\Delta^{2}\right)\label{hw2}
\end{align}
where $\{\mathcal{F}_t\}_{t\ge 0}$ denotes the filtration generated by $\{W_t\}_{t\ge 0}$ and the increment $m_{t_{n+1}}^{\Delta}-m_{t_{n}}^{\Delta}$ is a $\mathcal{O}(\Delta)$ martingale.
Then the scheme $(X_n^\Delta)$
converges with weak order 1 and logarithmic strong order.
\end{lemma}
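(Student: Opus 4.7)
The plan is to split the statement into the weak-order-one and the logarithmic-strong-order assertions, treating each by the one-step analysis developed in \cite{alfonsi}. Both parts exploit the fact that \eqref{hw1} and \eqref{hw2} force the first two conditional moments of $(X_n^\Delta)$ to agree with those of the CIR diffusion up to $\mathcal{O}(\Delta^2)$, while the martingale increment $m^n_{t_{n+1}}-m^n_{t_n}$ vanishes in conditional expectation and is only of size $\mathcal{O}(\Delta)$. Proposition~\ref{boundedtl} supplies the moment bounds needed to control the remainder terms that will appear.

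For the weak bound, I would fix a smooth test function $\varphi$ with polynomial growth and introduce $u(t,x)=\mathbb{E}[\varphi(X(T))\mid X(t)=x]$, which solves the backward Kolmogorov equation $\partial_t u+\alpha(\mu-x)\,\partial_x u+\tfrac12\sigma^2 x\,\partial_{xx}u=0$. The global weak error telescopes as $\sum_{n=0}^{N-1}\mathbb{E}\bigl[u(t_{n+1},X_{n+1}^\Delta)-u(t_n,X_n^\Delta)\bigr]$. Taylor-expanding in $(t,x)$ around $(t_n,X_n^\Delta)$ and substituting the conditional first and second moments from \eqref{hw1}--\eqref{hw2} makes the $\mathcal{O}(\Delta)$ part cancel against the PDE, while the $m^n$ increment drops out under conditioning on $\mathcal{F}_{t_n}$. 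Each summand is therefore $\mathcal{O}(\Delta^2)$, and summing $N=T/\Delta$ of them yields the $\mathcal{O}(\Delta)$ weak rate.

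For the strong bound, I would use a Yamada--Watanabe type regularization. Let $\bar X(t)$ be a continuous-time embedding of $(X_n^\Delta)$ whose semimartingale decomposition on each subinterval $[t_n,t_{n+1})$ reproduces the drift $\alpha(\mu-X_n^\Delta)$ and diffusion $\sigma\sqrt{X_n^\Delta}$ appearing in \eqref{hw1}. Choose a family of smooth nonnegative convex approximations $\phi_\epsilon$ of $|\cdot|$ with $0\le\phi'_\epsilon\le 1$ and $\phi''_\epsilon$ tailored so that $y\,\phi''_\epsilon(y)\le C/\log(1/\epsilon)$ on the relevant range. Applying It\^o's formula to $\phi_\epsilon(X(t)-\bar X(t))$, the drift term contributes a piece Lipschitz in the difference, while the quadratic-variation term is controlled through $(\sqrt{a}-\sqrt{b})^2\le|a-b|$ combined with the profile of $\phi''_\epsilon$, producing a $C/\log(1/\epsilon)$ contribution; the genuine martingales integrate out in expectation, and the leftover $m^n$ pieces contribute $\mathcal{O}(\Delta)$ via \eqref{hw2}. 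A Gronwall argument then gives $\mathbb{E}|X(t)-\bar X(t)|\le C(\epsilon+1/\log(1/\epsilon))$, and setting $\epsilon=\Delta$ yields \eqref{so}. The main obstacle I anticipate is exactly this strong step: constructing a faithful continuous-time embedding of the implicit scheme and then matching its piecewise-frozen diffusion $\sigma\sqrt{X_n^\Delta}$ with the pathwise $\sigma\sqrt{X(t)}$ introduces an oscillation that must be absorbed into the Yamada--Watanabe estimate without destroying the logarithmic rate, and the cumulative contribution of the family $\{m^n\}$ across the $N=T/\Delta$ intervals must be controlled globally rather than only stepwise.
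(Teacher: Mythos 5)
First, a point of context: the paper does not actually prove Lemma \ref{weakprop} --- it is stated as an imported result, ``strongly based on'' \cite{alfonsi}, and the burden of proof is carried entirely by that reference. Your plan is therefore being compared not with an argument in this paper but with Alfonsi's original proofs, and at the level of strategy it does match them: the weak rate is obtained there by the Talay--Tubaro telescoping of $u(t_{n+1},X_{n+1}^\Delta)-u(t_n,X_n^\Delta)$ against the backward Kolmogorov equation, and the logarithmic strong rate by a Yamada--Watanabe regularization of $\lvert\cdot\rvert$ combined with $(\sqrt{a}-\sqrt{b})^2\le \lvert a-b\rvert$.

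As a proof, however, the plan has concrete gaps. The decisive one in the weak part is the regularity of $u(t,x)=\mathbb{E}[\varphi(X(T))\mid X(t)=x]$: for the CIR generator the diffusion coefficient $\sigma\sqrt{x}$ is degenerate and non-smooth at $x=0$, so the existence of bounded (polynomially growing) spatial derivatives of $u$ up to the order required by the Taylor remainder is precisely the hard technical content of the weak-order theorem in \cite{alfonsi}; it cannot simply be asserted. Relatedly, your Taylor expansion needs the \emph{third and fourth} conditional moments of the increment $X_{n+1}^\Delta-X_n^\Delta$ to be $\mathcal{O}(\Delta^2)$, while hypotheses \eqref{hw1}--\eqref{hw2} only control the first two; the higher-moment bounds must be extracted from the explicit structure of \eqref{hw1} (the Gaussian term has vanishing odd conditional moments and the martingale increment is $\mathcal{O}(\Delta)$) together with the uniform moment bounds of Proposition \ref{boundedtl}, a step your plan omits. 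In the strong part you have identified the obstacle yourself: the continuous-time embedding $\bar X$ is never constructed, and without it the It\^o/Yamada--Watanabe computation cannot even be written down; moreover the oscillation between the frozen coefficient $\sigma\sqrt{X_n^\Delta}$ and the pathwise $\sigma\sqrt{X(t)}$, and the accumulation of the $N=T/\Delta$ martingale corrections $m^n$, are exactly where the $1/\log(1/\Delta)$ rate in \eqref{so} is either won or lost. So the plan is the right road map --- the same one the cited source follows --- but it is a road map rather than a proof, and each of the three issues above requires a substantial argument.
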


\begin{proposition}\label{proporder} {Starting at $X_0{\ge}0$, the implicit $\theta$-Misltein scheme \eqref{mil} with $\theta\ge 1$ applied to solve \eqref{CIR} with  
$4\alpha\mu\ge\sigma^2$ attains weak order 1 and logarithmic strong order.}
\end{proposition}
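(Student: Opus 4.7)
The strategy is to verify hypotheses \eqref{hw1}--\eqref{hw2} of Lemma \ref{weakprop} for the scheme \eqref{mil2} and then invoke the lemma. Non-negativity and adaptedness come from Proposition \ref{non-neg}, while the uniform $L^{p}$-bounds needed to make all probabilistic $\mathcal{O}$-estimates uniform in $n$ are supplied by Proposition \ref{boundedtl}.

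Setting $c:=(1+\alpha\theta\Delta)^{-1}$ and using $\Delta W_{n}^{2}=\Delta+(\Delta W_{n}^{2}-\Delta)$, a direct rearrangement of \eqref{mil2} gives
\begin{equation*}
X_{n+1}-X_{n}=c\left\{\alpha(\mu-X_{n})\Delta+\sigma\sqrt{X_{n}}\,\Delta W_{n}+\tfrac{\sigma^{2}}{4}(\Delta W_{n}^{2}-\Delta)\right\}.
\end{equation*}
Since $c-1=-\alpha\theta\Delta\,c$, the right-hand side splits as $\alpha(\mu-X_{n})\Delta+\sigma\sqrt{X_{n}}\,\Delta W_{n}$ plus three correctors: a purely deterministic term $(c-1)\alpha(\mu-X_{n})\Delta$, which is $\mathcal{O}(\Delta^{2})$ thanks to $X_{n}=\mathcal{O}(1)$; and two conditionally centred terms, $(c-1)\sigma\sqrt{X_{n}}\Delta W_{n}$ and $c\,(\sigma^{2}/4)(\Delta W_{n}^{2}-\Delta)$, which I would collect into the martingale increment $m_{t_{n+1}}^{n}-m_{t_{n}}^{n}$. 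This delivers \eqref{hw1} with each summand at most $\mathcal{O}(\Delta)$ in probability.

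For \eqref{hw2}, I would square the displayed expression and take $\mathbb{E}[\,\cdot\,|\mathcal{F}_{t_{n}}]$. The three cross terms vanish: two because $\mathbb{E}[\Delta W_{n}]=0$ and $\mathbb{E}[\Delta W_{n}^{2}-\Delta]=0$, the third because $\mathbb{E}[\Delta W_{n}(\Delta W_{n}^{2}-\Delta)]=0$ (third Gaussian moment). What survives is $c^{2}\,\sigma^{2}X_{n}\Delta$ plus the diagonal contributions $c^{2}\alpha^{2}(\mu-X_{n})^{2}\Delta^{2}$ and $c^{2}(\sigma^{4}/16)\cdot 2\Delta^{2}$, both $\mathcal{O}(\Delta^{2})$. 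Expanding $c^{2}=1+\mathcal{O}(\Delta)$ then yields exactly \eqref{hw2}.

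I do not anticipate a serious obstacle: the semi-implicit step is linear in $X_{n+1}$, so $c$ is a scalar admitting a trivial Taylor expansion, and the Milstein correction $(\sigma^{2}/4)(\Delta W_{n}^{2}-\Delta)$ is precisely engineered so that its contribution to the conditional second moment is only $\mathcal{O}(\Delta^{2})$. The single point requiring attention is the uniformity of the $\mathcal{O}$-notation in $n$, which is exactly what Proposition \ref{boundedtl} provides. Once \eqref{hw1} and \eqref{hw2} are established, Lemma \ref{weakprop} immediately delivers weak order one and logarithmic strong order.
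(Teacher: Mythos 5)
Your proposal is correct and follows essentially the same route as the paper: verify hypotheses \eqref{hw1}--\eqref{hw2} using the uniform moment bounds of Proposition \ref{boundedtl}, identify the martingale increment as the conditionally centred terms $-\alpha\theta\Delta\,\sigma\sqrt{X_n}\,\Delta W_n$ and $\tfrac{\sigma^2}{4}(\Delta W_n^2-\Delta)$ (up to the harmless factor $c=(1+\alpha\theta\Delta)^{-1}$, whose expansion only contributes to the $\mathcal{O}(\Delta^2)$ remainder), and conclude via Lemma \ref{weakprop}. Your write-up is in fact more explicit than the paper's about which cross terms vanish in the conditional second moment and why.
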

\begin{proof}
{Since we are under the conditions of Proposition \ref{boundedtl}, $X_n=\mathcal{O}(1)$.
First, we show that $X_n$ fulfills \eqref{hw1}. Expanding \eqref{mil2} yields}
\begin{equation}
    \label{hw11}
X_{n+1}=X_n+\alpha(X_n-\mu)\Delta +\sigma\sqrt{X_n} \Delta W_n +m_{n+1}-m_n+\mathcal{O}(\Delta^2)\end{equation}
where $m_n$ is the discrete $\mathcal{F}_{t_{n-1}}$-adapted process  
\begin{equation*}
m_n=
\begin{cases}
 0 & n=0 \\
 m_{n-1}-\sigma  \left( \alpha  \theta  \sqrt{X_{n-1}}\Delta  \Delta W_n +\frac{\sigma}{4}  \left(\Delta -\Delta W_n^2\right)\right) & n>0\\
\end{cases}
\end{equation*}
Since $m_{n+1}-{m}_{n}=\mathcal{O}(\Delta)$ and $\mathbb{E}\left[m_{n+1}-{m}_{n} \mid \mathcal{F}_{t_{n}}\right]=0$, \eqref{hw1} holds.

To calculate the conditional expectation of $\left(X_{n+1}^\Delta-{X}_{n}^{\Delta}\right)^{2}$ up to $\mathcal{O}(\Delta^2)$ it is sufficient to square the  expansion \eqref{hw11}, remove the terms of null expectation (the terms that are a multiple of an odd power of $\Delta W_n$) and group the terms of $\mathcal{O}(\Delta^2)$ together. 
So \eqref{hw2} is obtained, and the result follows from Lemma \ref{weakprop}.
\end{proof}
}
    
\section{\label{secm1} Mean-reverting \texorpdfstring{$\theta$}{theta}-Milstein schemes}

We analyze now under which  conditions the schemes  \eqref{mil2} with $\theta\ge 1$ are able to reproduce  the mean-reverting property of the exact solution
for problems with the condition $\sigma^2\le 4\alpha\mu$.

Since $\mathbb{E}\left[\Delta{W_{n}}\right]=0$ and $\mathbb{E}\left[\Delta{W_{n}}^2\right]=\Delta$, taking expectations in \eqref{mil2} gives 
\begin{equation}
\label{e1}
\mathbb{E}\left[X_{n+1}\right]=A\,\mathbb{E}\left[X_{n}\right]+B
\end{equation}
where
\begin{equation}
    \label{defab}
    A:=\frac{1-\alpha\Delta(1-\theta)}{1+\alpha\theta\Delta},\ \ 
    B:=\frac{\alpha\mu\Delta}{1+\alpha\theta\Delta}.
\end{equation}
From \eqref{e1}
\begin{equation}\label{exn}
    \mathbb{E}\left[X_{n}\right]=A^n \mathbb{E}\left[X_0\right]+
    \frac{1-A^n}{1-A}\, B=
    A^n\left(\mathbb{E}\left[X_0\right]-\mu\right)+\mu,
\end{equation}
where we have used that $B/(1-A)=\mu$. Since $\theta\ge 1$,
$$0<\frac{1+\alpha\Delta(\theta-1)}{1+\alpha\theta\Delta}=A=1-\frac{\alpha\Delta}{1+\alpha\theta\Delta}<1.$$
Then $A^n\to 0$ as $n\to\infty$, and from \eqref{exn}, 
\begin{equation*}    \lim_{n\to\infty}\mathbb{E}\left[X_{n}\right]=
\mu.
\end{equation*}
We have proved:
\begin{theorem}
\label{teo}
Given any $\Delta>0$, the $\theta$-Milstein scheme with $\theta\ge 1$ preserves the long-term mean of the exact solution of any CIR equation with $4\alpha\mu-\sigma^2\ge 0$.
\end{theorem}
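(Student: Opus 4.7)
The plan is to reduce the statement to a closed-form expression for $\mathbb{E}[X_n]$ via a first-order linear recursion, and then show that the relevant geometric factor lies strictly inside the unit interval precisely because $\theta\ge 1$. First, I would take expectations on both sides of the implicit formula \eqref{mil2}. Since $X_n$ is $\mathcal{F}_{t_n}$-measurable while $\Delta W_n$ is independent of $\mathcal{F}_{t_n}$ with $\mathbb{E}[\Delta W_n]=0$ and $\mathbb{E}[\Delta W_n^2]=\Delta$, the noise terms $\sigma\sqrt{X_n}\,\Delta W_n$ and $\frac{\sigma^2}{4}\Delta W_n^2$ collapse, after expectation, into a single deterministic contribution $\frac{\sigma^2}{4}\Delta$. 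Grouping the remaining deterministic coefficients gives an affine recursion of the form $\mathbb{E}[X_{n+1}]=A\,\mathbb{E}[X_n]+B$, with $A$ and $B$ exactly as introduced in \eqref{defab}.

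The second step is to solve this linear recursion in closed form. The standard telescoping argument yields
\begin{equation*}
\mathbb{E}[X_n]=A^n\mathbb{E}[X_0]+\frac{1-A^n}{1-A}\,B,
\end{equation*}
and a short algebraic check shows that $B/(1-A)=\mu$, so that $\mathbb{E}[X_n]=A^n(\mathbb{E}[X_0]-\mu)+\mu$. This mirrors the continuous-time identity \eqref{media}, with $A$ playing the discrete counterpart of $e^{-\alpha t}$.

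The key and only substantive step is to verify $0<A<1$ for every $\Delta>0$ under the hypothesis $\theta\ge 1$. I would rewrite $A=1-\frac{\alpha\Delta}{1+\alpha\theta\Delta}$, from which $A<1$ is immediate since $\alpha,\Delta>0$. For the lower bound $A>0$, I would note that the numerator of the original fraction, $1+\alpha\Delta(\theta-1)$, is nonnegative exactly when $\theta\ge 1$, while the denominator $1+\alpha\theta\Delta$ is strictly positive. Together these give $A\in(0,1)$, so $A^n\to 0$ and the closed form above yields $\lim_{n\to\infty}\mathbb{E}[X_n]=\mu$.

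The only potential obstacle is the sign analysis of $A$, and it is precisely this step that forces the restriction $\theta\ge 1$: for $\theta<1$ one would have to impose a step-size constraint $\Delta\le 1/(\alpha(1-\theta))$ to maintain $A>0$, and without implicitness no unconditional bound is available. The hypothesis $4\alpha\mu\ge\sigma^2$ is not used in the mean-reversion argument itself; it enters only indirectly, via Proposition \ref{non-neg}, to guarantee that the scheme \eqref{mil2} is well defined, so that taking expectations is legitimate.
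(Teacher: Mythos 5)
Your proposal is correct and follows essentially the same route as the paper: take expectations in \eqref{mil2} to obtain the affine recursion \eqref{e1} with $A$ and $B$ as in \eqref{defab}, solve it to get $\mathbb{E}[X_n]=A^n(\mathbb{E}[X_0]-\mu)+\mu$, and observe that $\theta\ge 1$ forces $0<A<1$ unconditionally in $\Delta$. Your closing remarks on the role of $\theta\ge 1$ versus a step-size restriction, and on $4\alpha\mu\ge\sigma^2$ entering only through well-posedness of the scheme, are accurate but not needed beyond what the paper already does.
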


Notice the similarity between the expressions of the exact mean \eqref{media} and the numerical mean \eqref{exn} calculated with the $\theta$-Milstein method.
The difference between them at $t_n=n\Delta$ can be written
$$\varepsilon_n^\Delta(\theta):=\mathbb{E}\left[X_{n}\right]-\mathbb{E}\left[X(t_n)\right]=(A^n-e^{-\alpha \Delta n})\left( \mathbb{E}\left[X_0\right]-\mu\right)=g(\theta)\left( \mathbb{E}\left[X_0\right]-\mu\right)$$
where
$$g(\theta):=\left(1-\frac{\alpha\Delta}{1+\alpha\theta\Delta}\right)^n-e^{-\alpha \Delta n}.$$

Since $\alpha\Delta>0$, we have 
$e^{\alpha\Delta}>1+\alpha\Delta$; therefore
$ e^{-\alpha\Delta}<(1+\alpha\Delta)^{-1}$ and
from here $g(1)=(1+\alpha\Delta)^{-n}-e^{-\alpha \Delta n}>0.$
As $$g'(\theta)=\frac{\alpha ^2 \Delta ^2 n \left(1-\frac{\alpha  \Delta }{\alpha  \Delta  \theta +1}\right)^{n-1}}{(\alpha  \Delta  \theta +1)^2}>0,$$
$g$ is an increasing function. 
Consequently, for any $\theta\ge 1$, $g(\theta)\ge g(1)>0$.
Then, on the interval $[1,\infty)$ we have that $|g(\theta)|=g(\theta)$ and $g$ attains its minimum value  at $\theta=1$. This proves that
the fully implicit Milstein method  gives the best approximation of the mean value of the exact solution: 
\begin{proposition}
    \label{minerror1}
    For $\theta\geq 1$, the error at each point $t_n$    $$\left|\varepsilon_n^\Delta(\theta)\right|=\left|\mathbb{E}\left[X(t_n)\right]-\mathbb{E}\left[X_{n}\right]\right|$$ attains its minimum when $\theta=1$.
\end{proposition}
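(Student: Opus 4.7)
The plan is to exploit the factorization $\varepsilon_n^\Delta(\theta) = g(\theta)\bigl(\mathbb{E}[X_0]-\mu\bigr)$ already established before the statement. Since the second factor does not depend on $\theta$, the problem of minimizing $|\varepsilon_n^\Delta(\theta)|$ over $\theta\in[1,\infty)$ reduces entirely to minimizing $|g(\theta)|$ over the same interval. So the task splits into two observations: (i) determine the sign of $g$ on $[1,\infty)$ so that the absolute value can be removed, and (ii) study the monotonicity of $g$.

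For (i), I would first evaluate $g(1)=(1+\alpha\Delta)^{-n}-e^{-\alpha\Delta n}$, and use the elementary inequality $e^{x}>1+x$ for $x>0$, applied with $x=\alpha\Delta$, to deduce $e^{-\alpha\Delta}<(1+\alpha\Delta)^{-1}$ and hence $g(1)>0$. For (ii), I would compute $g'(\theta)$ directly, observing that $g$ depends on $\theta$ only through the base $A(\theta)=1-\alpha\Delta/(1+\alpha\theta\Delta)$, which is itself positive for $\theta\ge 1$ and strictly increasing in $\theta$; since the $n$-th power is increasing on positive numbers, $g$ is strictly increasing. Combining (i) and (ii), $g(\theta)\ge g(1)>0$ on $[1,\infty)$, so $|g(\theta)|=g(\theta)$ attains its minimum precisely at $\theta=1$.

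Multiplying back by the $\theta$-independent factor $|\mathbb{E}[X_0]-\mu|$ then yields the claim on $|\varepsilon_n^\Delta(\theta)|$, with the minimum attained at $\theta=1$ (and equal to $g(1)\,|\mathbb{E}[X_0]-\mu|$, which vanishes exactly in the degenerate case $\mathbb{E}[X_0]=\mu$).

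I do not foresee a serious obstacle: all of the ingredients are already assembled in the paragraphs preceding the proposition, and the only thing left is to package them as a formal minimization argument. The slight subtlety worth flagging is that the argument works uniformly in $n$, so that the conclusion is pointwise at each time step $t_n$ rather than merely in an asymptotic sense; this is automatic here because $g$ is increasing in $\theta$ for every fixed $n$.
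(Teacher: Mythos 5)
Your proposal is correct and follows essentially the same route as the paper: factor out the $\theta$-independent term $\mathbb{E}[X_0]-\mu$, show $g(1)>0$ via $e^{\alpha\Delta}>1+\alpha\Delta$, and show $g$ is increasing on $[1,\infty)$ (the paper does this by computing $g'(\theta)>0$ explicitly, which matches your derivative computation and your equivalent observation that $A(\theta)$ is positive and increasing). No gaps.
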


\section{Long-term second moment}\label{secm2}

In this section, we explore if the $\theta$-Milstein methods with $\theta\ge 1$ also retain the long-term second moment of a CIR problem with parameters fulfilling the condition $\sigma^2\le 4\alpha\mu$, i.e. if \eqref{m2} holds 
when $X_n$ is obtained recursively by \eqref{mil2}.
Squaring \eqref{mil2} and taking expected values, we get 
\begin{equation}
\begin{aligned}
    \mathbb{E}[X_{n+1}^2]=&\frac{1}{(1+\alpha\theta\Delta)^2}
    \left\{\left(1-\alpha\Delta+\alpha\theta\Delta\right)^2 \mathbb{E}[X_{n}^2]+\left(\alpha\mu-\textstyle\frac{\sigma^2}{4}\right)^2\Delta^2\right.
    \\&\ \ +\sigma^2\mathbb{E}[X_{n}]\Delta +\frac{\sigma^4}{16} 3 \Delta^2+
    2(1-\alpha\Delta+\alpha\theta\Delta)\left(\alpha\mu-\textstyle\frac{\sigma^2}{4}\right)\Delta\, \mathbb{E}[X_{n}]
    \\&    \left.\ \ 
    +2(1-\alpha\Delta+\alpha\theta\Delta)\textstyle\frac{\sigma^2}{4}\Delta\, \mathbb{E}[X_{n}]+
    2\left(\alpha\mu-\textstyle\frac{\sigma^2}{4}\right)
       \textstyle\frac{\sigma^2}{4}\Delta^2
    \right\},
    \end{aligned}
    \end{equation}
which can be written
\begin{equation}
    \label{exn+1}
    \mathbb{E}[X_{n+1}^2]=A^2\, \mathbb{E}[X_n^2]+D\, \mathbb{E}[X_n]+ E
\end{equation}
where $A$ is given in \eqref{defab} and
$$D:=\frac{\left(\sigma^2+2\alpha\mu(1-\alpha\Delta+\alpha\theta\Delta)\right)\Delta}{(1+\alpha\theta\Delta)^2},\ \ 
E:=\frac{\left(8\alpha^2\mu^2+\sigma^4\right)\Delta^2}{8(1+\alpha\theta\Delta)^2}.$$
Recall that $\theta\ge 1$ implies $0<A<1$ and  $\lim\limits_{n\to\infty}\mathbb{E}\left[X_{n}\right]=
\mu$. Then, from \eqref{exn+1},
\begin{equation}
   \lim_{n\to\infty} \mathbb{E}\left[X_{n}^2\right]=\frac{D\mu+E}{1-A^2}= \mu ^2+\frac{ \mu\sigma ^2 }{2 \alpha }+ \frac{ \sigma^2(4\mu\alpha(1-2\theta)+\sigma^2)\Delta}{8 \alpha (2+\alpha\Delta(2\theta-1)) }.
  \end{equation}
This equality shows that the
approximation $\{X_{n}\}$ holds \eqref{m2}
if and only if
$4\mu\alpha(1-2\theta)+\sigma^2=0$, which is equivalent to
$$\theta=\frac{\sigma^2+4\mu\alpha}{8\mu\alpha}.$$
Using that $\sigma^2\le 4\alpha\mu$ and $\theta\ge 1$ we conclude that:
\begin{theorem}
\label{teo2}
The fully implicit Milstein scheme ($\theta=1$) preserves the long-term second moment of the exact solution of the CIR equation if and only if the parameters fulfill $4\alpha\mu=\sigma^2$. 
For CIR problems with $4\alpha\mu>\sigma^2$, $\theta$-Milstein methods are not able to preserve the exact long-term second moment.
\end{theorem}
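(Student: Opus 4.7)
The plan is to derive a closed-form recursion for $\mathbb{E}[X_{n+1}^2]$ by squaring the $\theta$-Milstein update \eqref{mil2} and taking expectations, then pass to the limit $n\to\infty$ and compare with the exact long-term second moment \eqref{media2}.

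First I would square the right-hand side of \eqref{mil2}. The factor $1/(1+\alpha\theta\Delta)^2$ sits out front, and the bracketed quantity decomposes into an $\mathcal{F}_{t_n}$-measurable part plus the Wiener-driven terms $\sigma\sqrt{X_n}\,\Delta W_n$ and $(\sigma^2/4)\,\Delta W_n^2$. Taking expectations and using the Gaussian moments $\mathbb{E}[\Delta W_n^{2k-1}]=0$, $\mathbb{E}[\Delta W_n^2]=\Delta$, $\mathbb{E}[\Delta W_n^4]=3\Delta^2$, together with the independence of $\Delta W_n$ from $\mathcal{F}_{t_n}$, kills every cross term with an odd power of $\Delta W_n$ and leaves a linear recursion
\begin{equation*}
\mathbb{E}[X_{n+1}^2]=A^2\,\mathbb{E}[X_n^2]+D\,\mathbb{E}[X_n]+E,
\end{equation*}
with $A$ as in \eqref{defab} and explicit constants $D,E$ depending on $\alpha,\mu,\sigma,\theta,\Delta$.

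Next I would pass to the limit. Because $\theta\ge 1$ implies $0<A<1$ (observed in Section \ref{secm1}) and Theorem \ref{teo} gives $\mathbb{E}[X_n]\to\mu$, a standard fixed-point argument yields $\lim_{n\to\infty}\mathbb{E}[X_n^2]=(D\mu+E)/(1-A^2)$. The main algebraic step is then to simplify this expression: after clearing denominators one should recover the leading contribution $\mu^2+\sigma^2\mu/(2\alpha)$ plus a residual term proportional to $\Delta$ whose numerator factors as $\sigma^2\bigl(4\mu\alpha(1-2\theta)+\sigma^2\bigr)$. The long-term second moment is preserved exactly if and only if this residual vanishes, forcing $\theta=(\sigma^2+4\mu\alpha)/(8\mu\alpha)$.

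Finally I would intersect this constraint with the admissible range. Under the standing assumption $\sigma^2\le 4\alpha\mu$, the required value satisfies $\theta\le 1$, with equality precisely when $\sigma^2=4\alpha\mu$. Combined with $\theta\ge 1$, this singles out $\theta=1$ together with $4\alpha\mu=\sigma^2$; for every CIR problem with strict inequality $4\alpha\mu>\sigma^2$ the residual is strictly nonzero for all admissible $\theta$, so no $\theta$-Milstein scheme in this class can reproduce the limit \eqref{m2}. The principal obstacle is not conceptual but computational: the expansion of $(D\mu+E)/(1-A^2)$ involves many cancellations, and some care is needed to factor the residual cleanly into the form $\sigma^2(4\mu\alpha(1-2\theta)+\sigma^2)$ that makes the dichotomy transparent.
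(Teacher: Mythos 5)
Your proposal follows essentially the same route as the paper: squaring \eqref{mil2}, taking expectations with the Gaussian moments to obtain the linear recursion $\mathbb{E}[X_{n+1}^2]=A^2\,\mathbb{E}[X_n^2]+D\,\mathbb{E}[X_n]+E$, passing to the limit via $0<A<1$ and $\mathbb{E}[X_n]\to\mu$, and isolating the residual $\sigma^2\left(4\mu\alpha(1-2\theta)+\sigma^2\right)$ whose vanishing forces $\theta=(\sigma^2+4\mu\alpha)/(8\mu\alpha)$, incompatible with $\theta\ge 1$ unless $4\alpha\mu=\sigma^2$. The argument is correct and matches the paper's proof in both structure and conclusion.
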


For $\theta>1$ and $\sigma^2\le 4\alpha\mu$ or $\theta=1$ and $\sigma^2< 4\alpha\mu$, one has $4\mu\alpha(1-2\theta)+\sigma^2<0$; then the error at $t_n$ of the $\theta$-Milstein method applied with step-size $\Delta>0$  in the calculation of the second moment is
\begin{equation*}\label{Second Moment Error}
   \varepsilon_{2,n}^\Delta(\theta):=
\mathbb{E}\left[X_{n}^2\right]-
 \mathbb{E}\left[X^2({t_n})\right],
\end{equation*}
and when time tends to infinity we have
$$\varepsilon_{2}^\Delta(\theta):=\lim_{n\to\infty}\varepsilon_{2,n}^\Delta(\theta)=
\frac{ \sigma^2(4\mu\alpha(1-2\theta)+\sigma^2)\Delta}{8 \alpha (2+\alpha\Delta(2\theta-1))}<0,$$
i.e., when $\theta>1$ or $\sigma^2<4\alpha\mu$ then $\theta$-methods underestimate the exact long-term second moment with negative bias $\lim_{n\to\infty}\varepsilon_{2,n}^\Delta(\theta)$ that tends to zero as the step size $\Delta$ does so. We can analyze for which values of $\theta$ the long-term second moment error, $\varepsilon_{2}^\Delta(\theta)$, attains its minimum (as a function of $\theta$). We have that $\varepsilon_{2}^\Delta(\theta)<0$ and $$(\varepsilon_{2}^\Delta)'(\theta)=-\frac{\Delta  \sigma ^2 \left(\Delta  \sigma ^2+8 \mu \right)}{4 (2+\alpha  \Delta  (2 \theta -1))^2}<0$$  from which the next proposition follows:
\begin{proposition}
    \label{minerror2}
    For $\theta\geq 1$, the error $$\left|\varepsilon_2^\Delta(\theta)\right|=\lim_{n\to\infty}\left|\mathbb{E}\left[X(t_n)\right]-\mathbb{E}\left[X_{n}\right]\right|$$ attains its minimum when $\theta=1$. 
\end{proposition}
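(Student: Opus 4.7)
The proposition asserts that on $[1,\infty)$ the (absolute) long-term second-moment error attains its minimum at $\theta=1$. My plan is to read off this monotonicity directly from the two pieces of sign information already established in the paragraph immediately preceding the statement, so that the proof collapses to a short monotonicity argument on an explicit rational function of $\theta$.

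First I would invoke the closed form
$$\varepsilon_2^\Delta(\theta) = \frac{\sigma^2\bigl(4\mu\alpha(1-2\theta)+\sigma^2\bigr)\Delta}{8\alpha\bigl(2+\alpha\Delta(2\theta-1)\bigr)}.$$
Under the standing assumptions $\theta\ge 1$ and $\sigma^2\le 4\alpha\mu$, the numerator factor $4\mu\alpha(1-2\theta)+\sigma^2 = \sigma^2-4\mu\alpha(2\theta-1)$ is non-positive (and vanishes only in the degenerate boundary case $\theta=1$, $\sigma^2=4\alpha\mu$), while the denominator is bounded below by $2+\alpha\Delta>0$. Hence $\varepsilon_2^\Delta(\theta)\le 0$ on $[1,\infty)$, so one may drop absolute values: $|\varepsilon_2^\Delta(\theta)| = -\varepsilon_2^\Delta(\theta)$.

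Next I would exploit the derivative already computed just above the proposition,
$$(\varepsilon_2^\Delta)'(\theta) = -\frac{\Delta\sigma^2(\Delta\sigma^2+8\mu)}{4\bigl(2+\alpha\Delta(2\theta-1)\bigr)^2}<0,$$
which shows that $\varepsilon_2^\Delta$ is strictly decreasing on $[1,\infty)$. Combined with the previous step, the map $\theta\mapsto|\varepsilon_2^\Delta(\theta)| = -\varepsilon_2^\Delta(\theta)$ is strictly increasing on $[1,\infty)$, so its minimum over this half-line is attained at the left endpoint $\theta=1$, which is the claim.

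There is essentially no obstacle: everything needed is already written out just above the proposition, and the entire argument amounts to combining the sign of $\varepsilon_2^\Delta$ with the sign of its derivative. The only mildly subtle point worth a line is the boundary case $\sigma^2=4\alpha\mu$, where the minimum value is exactly zero; this is perfectly consistent with Theorem \ref{teo2}, which already identified $\theta=1$ together with $\sigma^2=4\alpha\mu$ as the unique configuration in which the long-term second moment is preserved exactly.
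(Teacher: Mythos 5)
Your argument is correct and coincides with the paper's own reasoning: the paper likewise combines $\varepsilon_2^\Delta(\theta)<0$ with $(\varepsilon_2^\Delta)'(\theta)<0$ to conclude that $|\varepsilon_2^\Delta|=-\varepsilon_2^\Delta$ is increasing on $[1,\infty)$ and hence minimized at $\theta=1$. Your remark on the boundary case $\sigma^2=4\alpha\mu$ is a small, consistent addition but not a departure from the paper's approach.
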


\section{Numerical Experiments}\label{secnum}

 Here, the theoretical results presented in Sections \ref{secorder}-\ref{secm2} are confirmed with numerical experiments. 
For the sake of comparison, in addition to $\theta$-Milstein schemes with $\theta=1$ and $\theta=1.5$, we shall use a series of schemes specially designed for the integration of the CIR problem: the modified Euler (HM) scheme presented in \cite{hmao}, the drift-implicit (DI) and E(0) schemes proposed by Alfonsi in \cite{alfonsi}, the Milstein-like scheme (HH) given by Hefter and Herzwurm in \cite{hefter},  as well as the mean reverting method (MS) proposed by the authors in \cite{nuestro}. 

In the experiments, we have integrated the equation \eqref{CIR} with different sets of parameters and initial value:
\begin{align}
        \alpha &=0.43,  \quad\mu =0.06, \quad\sigma =0.15,   \quad X_0 =0.057,\label{par1} \\
        \alpha &=0.5,  \ \, \quad\mu =0.5, \ \, \quad\sigma =1, \ \ \ \,\quad X_0 =0.525. \label{par2}
    \end{align}
The set of parameters given in \eqref{par1} was proposed in \cite{KladivkoMLE} as the maximum likelihood estimation values. Notice that they fulfill the condition $\sigma^2<4\alpha\mu$. The parameters in \eqref{par2} are much larger than those  in \eqref{par1} and the starting point is greater than the long-term mean. Notice also that they fulfill $\sigma^2=4\alpha\mu$. 

\medskip
\emph{Experiment 1}

\noindent The first experiment is devoted to confirm the weak order result proved in Proposition \ref{weakprop}.
We estimate the weak error ($\varepsilon_X$) of $\theta$-Milstein method, $\theta=1$,  integrating numerically the equation \eqref{CIR} with parameters \eqref{par1} and step sizes $\Delta=2^{-1},2^{-2},...,2^{-8}$ in the time interval $[0,1]$. For each calculation, we have used $10^6$ paths of the solution.  Analogous calculations were carried out with the methods HH, HM, E(0), DI and MS. In the top plot of Figure \ref{exp1}, the values of $\log(\varepsilon_{X}^\Delta)$ vs. $\log(\Delta)$ obtained with each method are represented. In order to contrast, dashed lines with slopes 1/2 and 1 have also been plotted. 

\begin{figure}
    \centering  \includegraphics[width=0.7\textwidth]{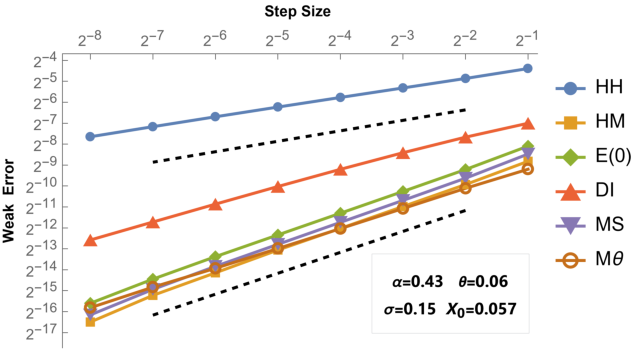}    \includegraphics[width=0.7\textwidth]{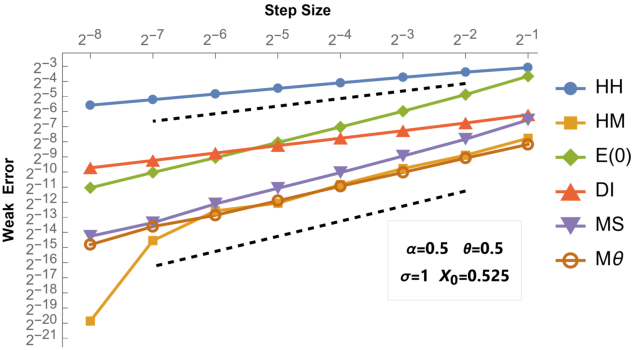}
    \caption{Log-log plot of the weak error $\varepsilon_X^\Delta$ at time $t = 1$ against $\Delta$ for schemes HH,  HM, E(0), DI, MS and implicit Milstein ($\theta=1$) with
data \eqref{par1} (top) and  \eqref{par2} (bottom).}
    \label{exp1}
\end{figure}
We have repeated the experiment for parameters and initial value given in \eqref{par2} and the results are shown in the bottom plot of Figure \ref{exp1}. In both cases, it can be seen that the line corresponding to the implicit Milstein scheme is similar to a straight line with slope approximately equal to 1. In addition, it can be observed that, except for HM with parameters \eqref{par1}, and for HM and HH with parameters \eqref{par2}, the remainder of the methods have graphs with similar slopes.

\medskip

\emph{Experiment 2}

\noindent
Here we illustrate the result regarding the strong convergence of \eqref{mil} presented in Proposition \ref{weakprop}. 
{Since HH has been shown to have strong order ${1}/{2}$ in the integration of the CIR model (with any set of parameters), to compute the strong errors  ($\mathcal{E}_X$) we use its numerical solution calculated with step size $\Delta=2^{-15}$ in place of the exact solution. }
We use $10^5$ paths to calculate ($\mathcal{E}_X^\Delta$) using the same schemes, step sizes and parameters as in Experiment 1. 
 Figure \ref{exp2} shows the values of $\log(\mathcal{E}_{X}^\Delta)$ vs. $\log(\Delta)$ obtained with each method for the data \eqref{par1} (top plot) and \eqref{par2} (bottom plot).
\begin{figure}
    \centering    \includegraphics[width=0.7\textwidth]{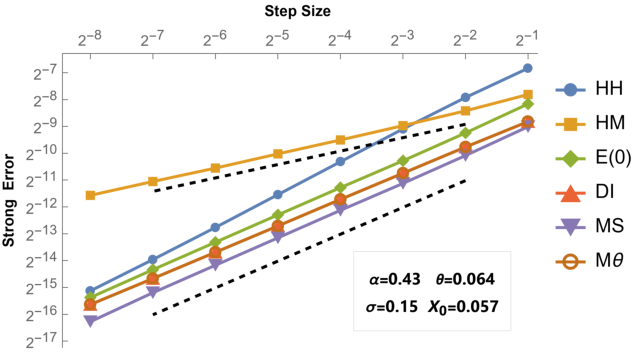}  \includegraphics[width=0.7\textwidth]{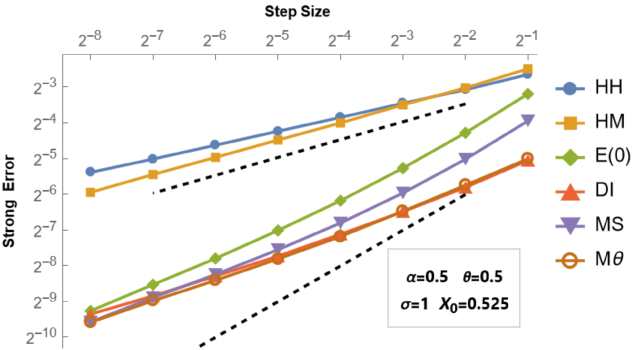}
    \caption{Log-log plot of the strong error $\mathcal{E}_X^\Delta$ at time $t = 1$ against $\Delta$ for schemes HH,  HM, E(0), DI, MS and implicit Milstein ($\theta=1$) with
data \eqref{par1} (top plot) and  \eqref{par2} (bottom plot).}
    \label{exp2}
\end{figure}
For the sake of comparison, dashed lines with slopes 1/2 and 1 have been represented. 
The Milstein method integration produces a line with an average slope of 0.98 for the data \eqref{par1} (top plot) and 0.66 for the data \eqref{par2} (bottom plot), suggesting that the strong order of convergence of Milstein method can be greater than what was shown in Proposition \eqref{proporder}. In Figure \ref{exp2} it can also be seen that all schemes except HM attain order (slope) near to 1 when they are applied to the CIR problem with parameters \eqref{par1} (top) whereas none do it when the parameters \eqref{par2} are considered (bottom).


\medskip
\emph{Experiment 3}

\noindent
We study the preservation of the long-term mean by the implicit Milstein schemes. To do so, we simulate for $N=3\times10^6$ paths of the numerical solution given by the $\theta$-Milstein schemes, with $\theta=1,\,1.5$, step size $\Delta=1/8,$ and data \eqref{par1} to equation \eqref{CIR} along the interval $[0,15]$. In order to contrast, we do the same with schemes HM, DI, E(0), and HH.


 For each scheme we calculate the sample mean $\overline{X}_{n}$ and the distance from $\overline{X}_{n}$ to the long-term mean as
\begin{equation*}
\overline{X}_{n}:=\frac{1}{N}\textstyle\sum\limits_{k=1}^{N}{X}_{n,k};\quad d_{\overline{X}}(t_{n}):=\left|\overline{X}_n-\mu\right|.
\end{equation*}
In Figure \ref{fig1} the graphical representation of the sample mean (top) and its distance to the long-term mean (bottom) for each method are shown. One can observe that only the modified Euler and $\theta$-Milstein methods revert to the long-term mean, in accordance with the theoretical results of Section \ref{secm1}.
\begin{figure}[htb]
\centering
\includegraphics[width=0.7\textwidth]{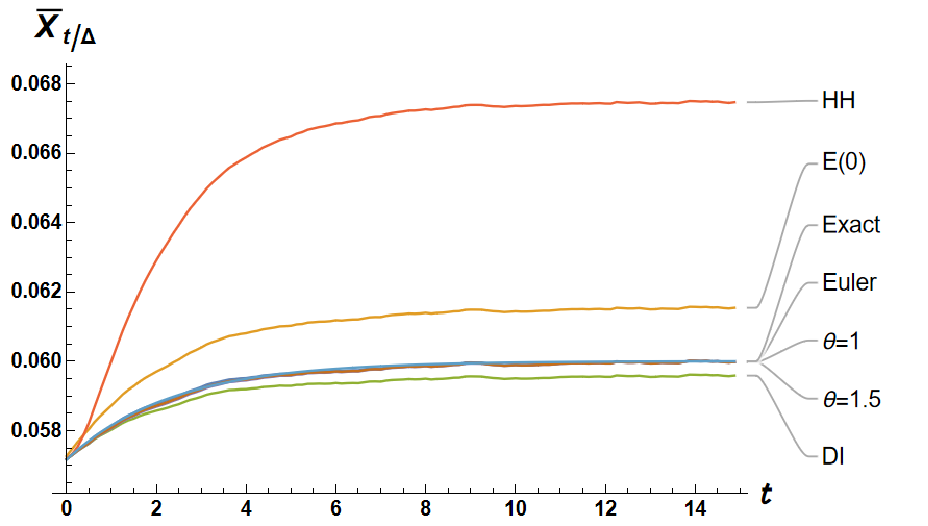}\\
\includegraphics[width=0.7\textwidth]{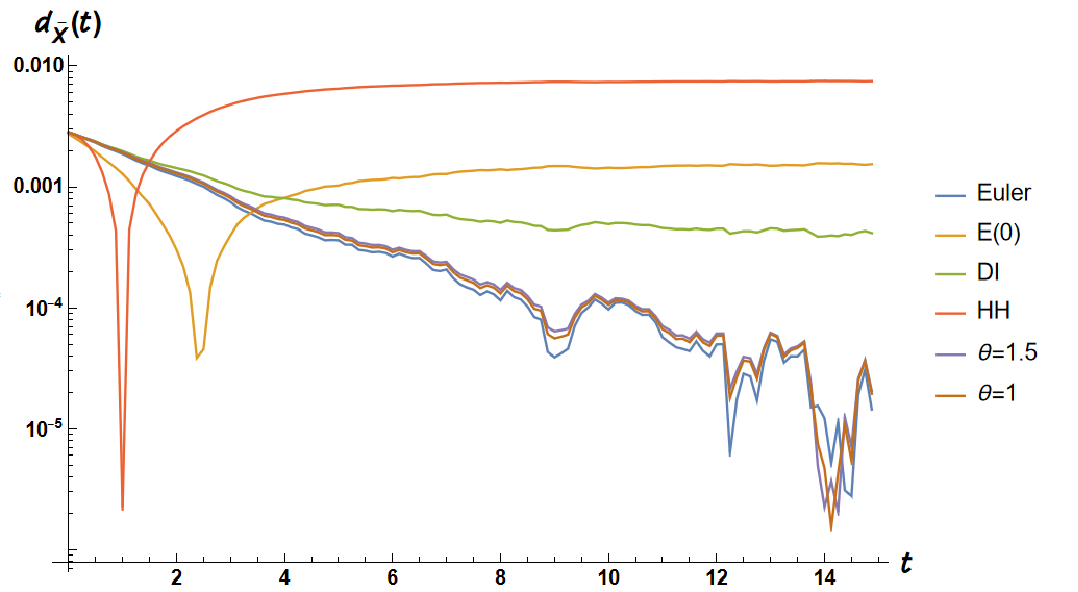}
\caption{Evolution of the first sample moment (top), and its distance to the long-term mean (bottom) for solving \eqref{CIR} with parameters \eqref{par1} and schemes \eqref{mil2}, HM, DI, E(0), and HH.} 
\label{fig1}    
\end{figure}

\medskip
\emph{Experiment 4}

\noindent
We repeat the calculations of the third experiment for the parameters given in \eqref{par2} (plots not shown as they are similar to Figure \ref{fig1}) and also compute for each method the sample second moment $\overline{X^2_{n}}$ and the distance from $\overline{X^2_{n}}$ to the long-term second moment as
\begin{equation*}
\overline{X^2_{n}}:=\frac{1}{N}\sum_{k=1}^{N}{X}^2_{n,k};\quad d_{\overline{X^2}}(t_{n}):=\textstyle\left|\overline{X^2_n}-\left(\mu^2+\frac{\sigma^2\mu}{2\alpha}\right)\right|.
\end{equation*}
In Figure \ref{fig2} the graphical representations of  $\overline{X^2}_{n}$ and $d_{\overline{X^2}}(t_{n})$ are shown on the top and bottom respectively for each method
. As can be seen, the fully implicit Milstein scheme is the only one whose sample second moment converges to the long-term second moment of the CIR process. This is in accordance with Theorem \ref{teo2}, as the parameters verify $\sigma^2=4\alpha\mu$. One can also observe that the sample second moment of the $\theta-$Milstein scheme with $\theta=1.5$ presents the second-nearest distance to the long-term second moment. 
\begin{figure}[htb]
\centering
\includegraphics[width=.7\textwidth]{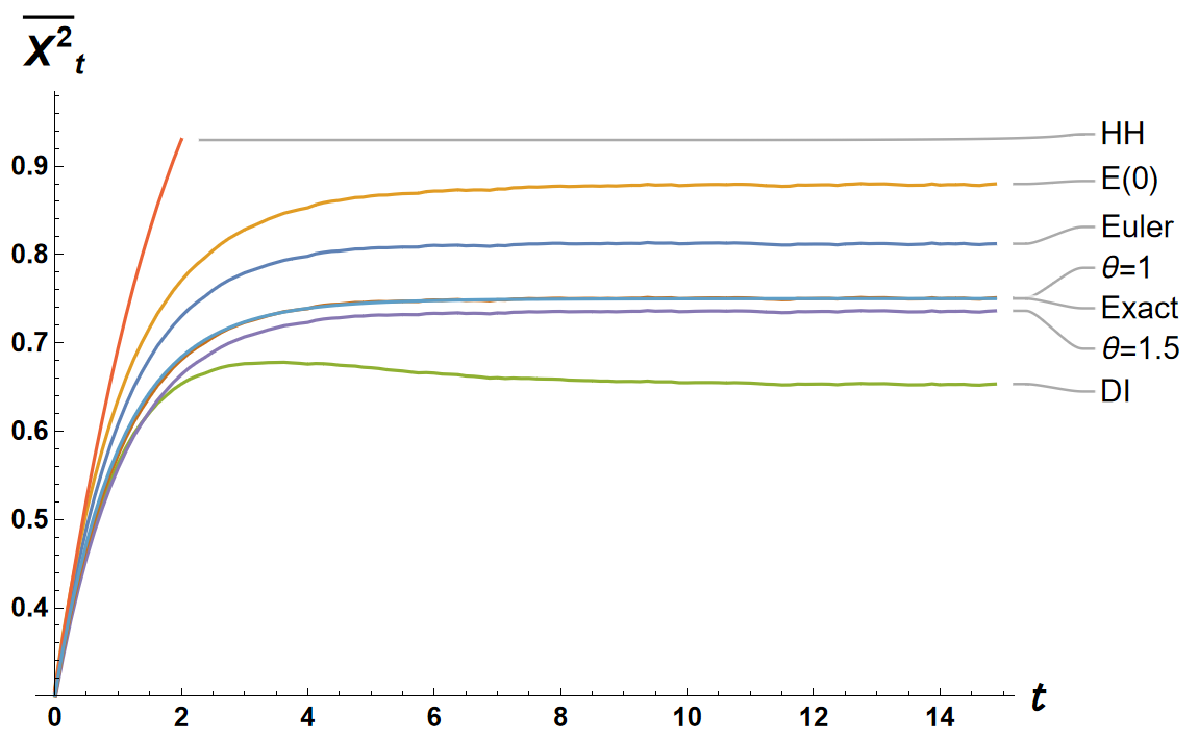}\\
\includegraphics[width=.7\textwidth]{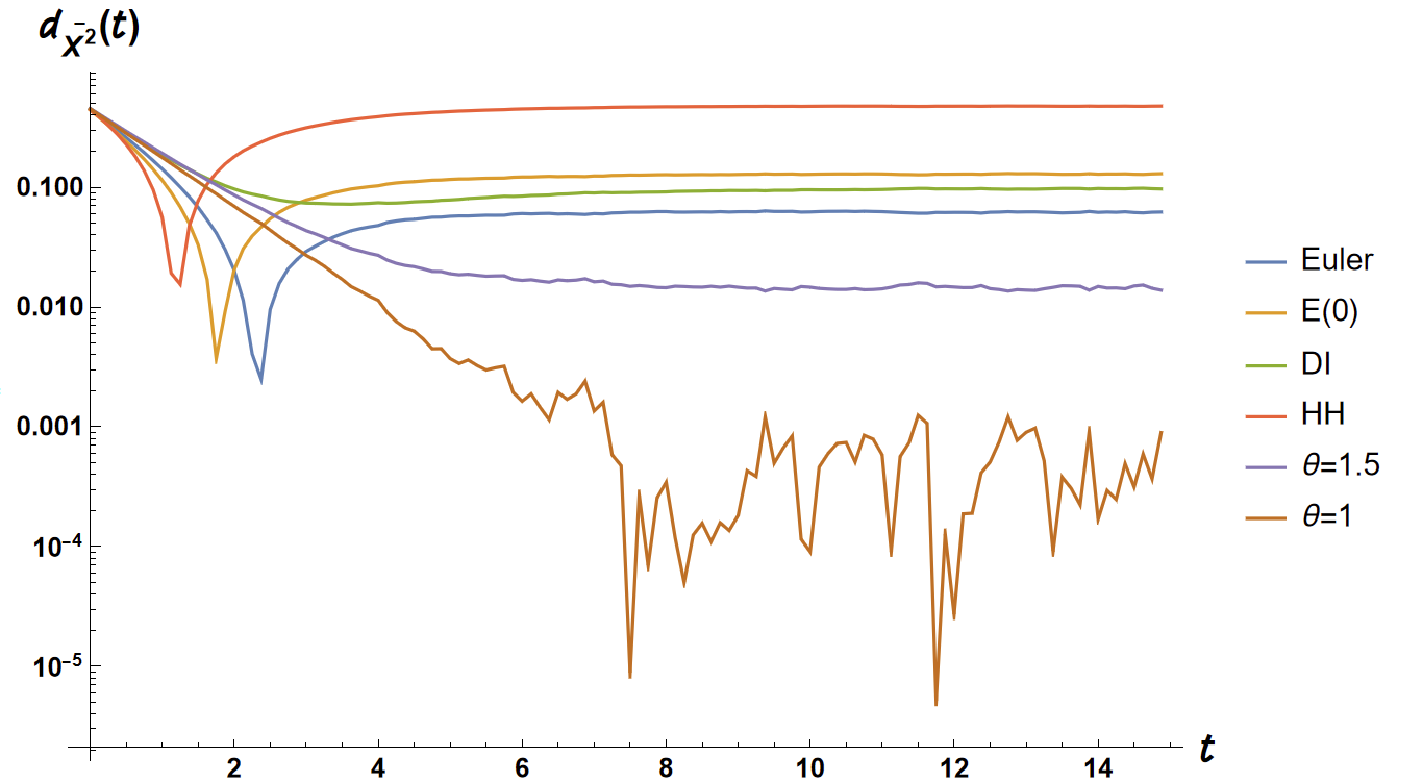}
\caption{Evolution of the sample second moment (top) and its distance to the long-term second moment (bottom) for solving \eqref{CIR} with parameters \eqref{par1} and schemes \eqref{mil2}, HM, DI, E(0), and HH.}
\label{fig2}
\end{figure}

\medskip
\emph{Experiment 5}

\noindent
Finally, we use equation \eqref{exn} to calculate the first moment error $\varepsilon_n^\Delta(\theta)$ of the $\theta-$Milstein schemes,  for the data \eqref{par1}; and equation \eqref{exn+1} to calculate the second moment error $\varepsilon_{2,n}^\Delta(\theta)$ for the data \eqref{par2}. Both errors are calculated for $\theta\in\{1,1.25,1.5,\ldots, 3\}$ and represented graphically in Figure \ref{fig3}:
\begin{figure}
\includegraphics[width=.515\textwidth]{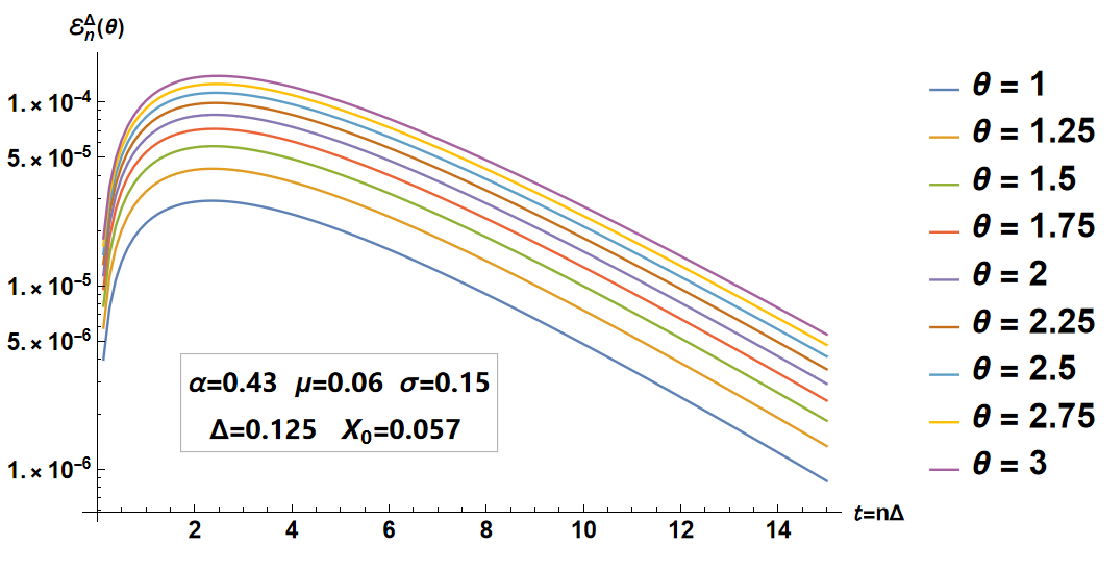}
\includegraphics[width=.48\textwidth]{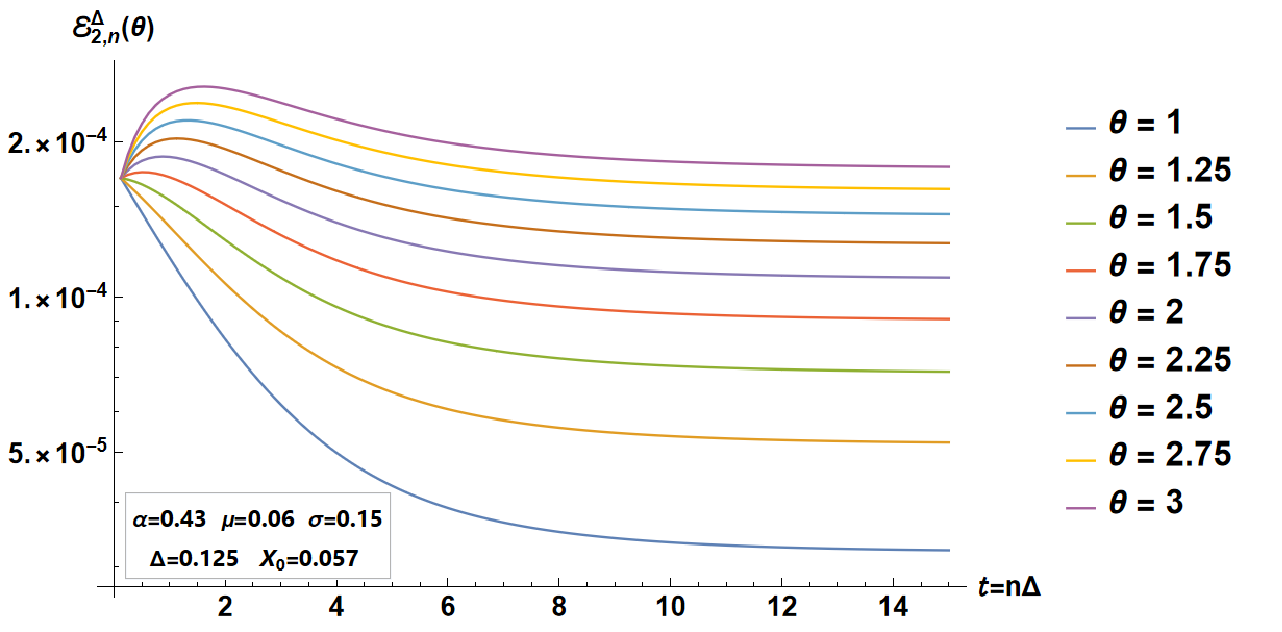}
\includegraphics[width=.515\textwidth]{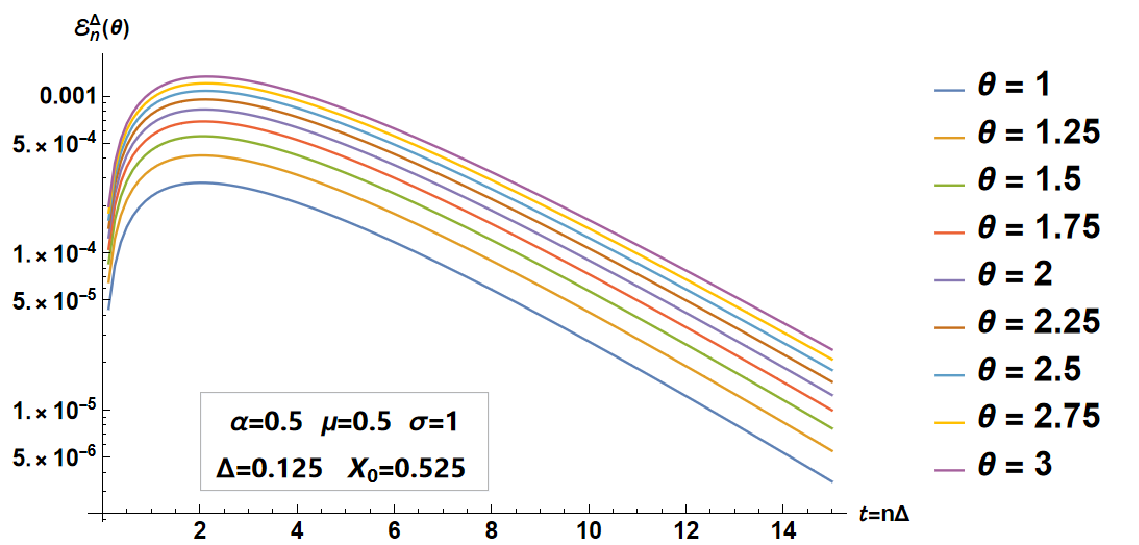}
\includegraphics[width=.48\textwidth]{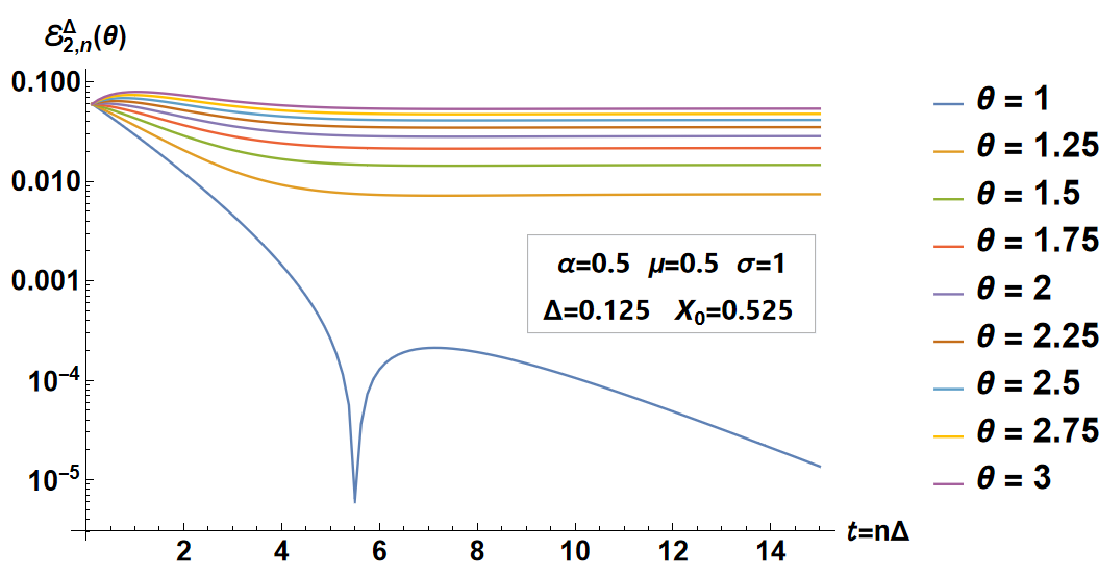}
\caption{First (left) and second (right) moment errors of implicit $\theta-$Milstein schemes {applied to the model with parameters \eqref{par1} (top) and \eqref{par2} (bottom)}.}
\label{fig3}
\end{figure}
The plot on the left shows that the first moment error converges to $0$ for all values of $\theta$ and both sets of parameters. The plot on the right shows, in agreement with Theorem \ref{teo2}, that only the $\theta-$Milstein scheme with $\theta=1$ preserves the long-term second moment for the data verifying $\sigma^2=4\alpha\mu$.

\section{Conclusions}
  The implicit $\theta$-Misltein schemes with $\theta\ge 1$ are appropriate methods to capture qualitative properties of the solution of the CIR model under the parameter condition $4\alpha\mu\ge\sigma^2$. They preserve the non-negativity of the exact solution as well as the mean-reverting property. The method for $\theta=1$ gives the least error with respect to the mean and long-term second moment. Furthermore, it can preserve the exact long-term second moment when $4\alpha\mu=\sigma^2$. Regarding the convergence, we have proved that the studied methods converge in the strong and weak senses when applied to the CIR equation. The weak order of convergence was proved (and confirmed numerically) to be 1. The strong order was proved to be at least logarithmic and numerical experiments suggest that it may attain higher order (up to 1).

\end{document}